\theoremstyle{plain}
 \newtheorem{Theorem}{Theorem}
 \newtheorem{Lemma}{Lemma}
 \newtheorem{Corollary}{Corollary}
\newtheorem{Proposition}{Proposition}
\theoremstyle{definition}
\newtheorem{assumption}{Assumption}
 \newcommand{\N}{\mathbb N}
 \newcommand{\T}{\mathbb T}
 \newcommand{\R}{\mathbb R}
\newcommand{\cA}{\mathcal A}
 \newcommand{\cC}{\mathcal C}
  \newcommand{\cL}{\mathcal L}
\newcommand{\cP}{\mathcal P}
\newcommand{\hu}{\hat{u}}
\newcommand{\bH}{\bar{H}}
\newcommand{\ub}{\bar{u}}
\newcommand{\bz}{\bar{z}}
\newcommand{\bw}{\bar{w}}
\newcommand{\bL}{\bar{L}}
 \newcommand{\fui}{\varphi}
 \newcommand{\ep}{\varepsilon}
 \newcommand{\de}{\delta}
  \newcommand{\lam}{\lambda}
\newcommand{\Lam}{\Lambda}
 \newcommand{\si}{\sigma}
 \newcommand{\rip}{\rangle}
 \newcommand{\lip}{\langle}
\newcommand{\diver}{\operatorname{div}}
\newcommand{\leb}{\operatorname{Leb}}
\newcommand{\tr}{\operatorname{Tr}}
\begin{document}
\title{Time-periodic Evans approach to weak KAM theory} 
\author{H\'ector S\'anchez Morgado}
\address{Instituto de Matem\'aticas.Universidad Nacional Aut\'onoma de M\'exico.
Cd. de M\'exico C. P. 04510, M\'exico}
\begin{abstract}
We study the time-periodic version of Evans 
approach to weak KAM theory.
Evans minimization problem is equivalent to a first oder mean field game system.
For the mechanical Hamiltonian  we prove the existence of smooth solutions. 
We introduce the corresponding effective Lagrangian and Hamiltonian and
prove that they are smooth. 
We also consider the limiting behavior of the effective 
Lagrangian and Hamiltonian, Mather measures and minimizers. 
\end{abstract}
\maketitle

\section{Introduction}
\label{sec:intro}

We consider the extension, to time-periodic Hamiltonians
$H:\T^{d+1}\times\R^d\to\R$, of Evans approach to weak KAM theory. 
For $k\in\N$, we address the problem of minimizing 
\begin{equation}
  \label{eq:min-per}
I_k[u]:=\int_{\T^{d+1}}e^{k(u_t +H(x,t,\nabla u))}
\end{equation}
among functions $u:\T^{d+1}\to\R$ with $\int u=0$.
We assume that $H:\T^{d+1}\times\R^d\to\R$ is a smooth function 
strictly convex and superlinear 
i.e. we assume that $H_{pp}$ is positive definite and
\[\frac{H(z,p)}{|p|}\to +\infty, \text{ as } |p|\to +\infty.\]
This variational problem is equivalent to the
first oder mean field game system
\begin{equation}\tag{MFG}\begin{cases}
u_t +H(x,t, \nabla u)=\frac 1k\ln m+\bH_k \\
m_t+\diver(H_pm)=0\\
\int u=0,  \int m=1 
\end{cases} 
\end{equation}
The minimization problem is 
also related through duality to an entropy penalized extension of 
Mather problem (section \ref{sec:epmt}). 
P. Cardaliaguet has also studied  first oder mean field games using 
variational principles in duality \cite{C}, \cite{CG} (with J. Graber), 
as well as other approaches \cite{C1}.
Althought the mean field game we are considering is time dependent, 
the fact that we are searching for time-periodic solutions $(u,m)$
together with the  constant $\bH_k$,
makes more appropriate to consider it as an ergodic problem. The main difference 
with the ergodic problems that have been studied is that the new
Hamiltonian $r+H(z,p)$ is neither coercive nor strictly convex in the variable $(p,r)$. 


According to \cite{CIS} there exists a unique $\bH\in\R$ such that the
the Hamilton Jacobi equation 
\begin{equation}\label{eq:hjt}
 u_t+H(z,\nabla u)=\bH,\quad z=(x,t) 
\end{equation}
has a Lipschitz viscosity solution $\phi:\T^{d+1}\to\R$.  
Moreover $\bH$ is given by the min-max formula
\begin{equation}\label{eq:minmax}
 \bH=\inf_{u\in C^1(\T^{d+1})}\max_{z\in\T^{d+1}} u_t(z)+H(z,\nabla u(z)) 
\end{equation}
which motivates Evans problem of minimizing the functional $I_k$ \cite{E1}.
The convexity of the exponential and Hamiltonian functions imply 
that $I_k$ is lower semicontinuous on $W^{1,q}(\T^{d+1})$ 
but it is not coercive due to the linear term $u_t$.

Using Jensen's inequality and  $\int u_t=0$, we have
\[e^{k\min H}\le\exp\int_{\T^{d+1}} kH(z,\nabla u) \le I_k[u],\]
Letting $e^{k\bH_k}=\inf_u I_k[u]$, we have
\[e^{k\min H} \le e^{k\bH_k}\le I_k[0] \le e^{\max\{H(z,0):z\in\T^{d+1}\}}\]
and thus
\[\min H\le\bH_k\le \max_{z\in\T^{d+1}}H(z,0).\]
A minimizer  must satisfy the Euler Lagrange equation
\begin{equation}\tag{EL}
  (e^{k(u_t+H(z,\nabla u))})_t+\diver(e^{k(u_t+H(z,\nabla u))}
  H_p(z,\nabla u))=0
\end{equation}
which can be written as
\begin{equation}
  \label{eq:pde}
u_{tt}+2H_p\nabla u_t+\nabla^2u(H_p,H_p)+\frac 1k\tr(H_{pp}\nabla^2 u)
+H_t+H_x\cdot H_p+\frac 1k\tr H_{px}=0
\end{equation}
where all derivatives of $H$ are evaluated at $(z,\nabla u(z))$.

Letting $m=e^{k(u_t +H(z,\nabla u)-\bH_k)}$, we transform (EL) 
together with the condition $\int u=0$ and the definition 
of $\bH_k$ into the mean field game system (MFG).

By the convexity of the exponential function
\[\int_{\T^{d+1}} e^{k(u_t+H(x,t,\nabla u))}\ge\int_{\T^{d}} e^{\int_0^1kH(x,t,\nabla u)dt}.\]
If $H$ does not depend on $t$, 
we define $\displaystyle\ub(x)=\int_0^1u(x,t)dt$, then
$\nabla\ub(x)=\displaystyle\int_0^1\nabla u(x,t)dt$. 
From the convexity of $H$ 
\begin{align}
  \int_0^1kH(x,\nabla u(x,t))dt &\ge kH(x,\nabla\ub(x))\\
\int_{\T^{d}} e^{\int_0^1 kH(x,\nabla u)dt}&\ge\int_{\T^{d}} e^{kH(x,\nabla\ub)}
\ge e^{k\bH_{k,a}}
\end{align}
where $e^{k\bH_{k,a}}$ is the minimum for the autonomous problem.
If $v$ is a minimizer for that problem, $v_t=0$ and
\[\int_{\T^{d+1}} e^{kH(x,\nabla v)}=e^{k\bH_{k,a}},\]
so $v$ is also a minimizer for the time dependent problem and $\bH_k=\bH_{k,a}$.

\section{Existence of classical solutions}
\label{sec:exist-solut-eqref}
Writing $z=(x,t)$, $q=(p,r)$, $Du(z)=(\nabla u(z),u_t(z))$,
\eqref{eq:pde} can be written in the form
\begin{equation}\label{eq:spde}
\tr(a_k(z, Du)D^2u)+b_k(z, Du)=0,
 \end{equation}
where
\[a_k(z,q)=
  \begin{pmatrix}
    \frac 1kH_{pp}+H_p\otimes H_p & H_p\\H_p^T & 1
  \end{pmatrix}
=\si\si^T,\quad
\si=\begin{pmatrix}
  (\frac 1kH_{pp})^\frac 12 & H_p \\ 0 & 1
\end{pmatrix}\]
\[b_k(z,q)=H_t+H_x\cdot H_p+\frac 1k\tr H_{px}.\]
The main difficulty to establish the existence of classical solutions of 
\eqref{eq:spde} is that the maximal and minimal eigenvalues of $a_k(z,q)$ go as 
$H_p^2$ and $1/H_p^2$ when $|p|$ tends to $\infty$.

One can obtain apriori Lipschitz bounds for solutions in
particular cases that include the most typical examples of
Hamiltonians. Those are the cases when $b_k$ is sublinear. 
More precisely,
\begin{assumption}\label{A1}
There exists a continuous function $\chi:[0,\infty)\to\R$ such that
\[\int^\infty\frac{ds}{\chi(s)+1}=\infty,\quad |b_k(z,q)|\le\chi(|q|),(z,q)\in
\T^{d+1}\times\R^{d+1} \] 
\end{assumption}
We observe that the assumption holds when
\begin{equation}
  \label{eq:mechanical}
  H(x,t,p)=\frac 12 |p+\eta(t)|^2+V(x,t).
\end{equation}
In fact,
\begin{align*}
b_k(z,p)&=(p+\eta(x,t))\cdot\eta'(t)+\nabla V(x,t)(p+\eta(t))
\end{align*}
does not depend on $k$, and we can take $\chi(s)=cs+d$ for some $c,d>0$. 
\begin{Lemma}\label{unilip}
Under assumption \ref{A1},  there exists $K>0$ depending only on $\chi$ 
such that for a solution $\phi\in  C^2(\T^{d+1})$ of \eqref{eq:spde},
we have $\|D\phi\|_\infty\le K$.
\end{Lemma}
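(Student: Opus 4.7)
The plan is a Bernstein-type maximum principle argument. Since $\phi\in C^2(\T^{d+1})$ on the compact torus, the function $w(z):=|D\phi(z)|^2$ attains its maximum at some $z_0\in\T^{d+1}$; set $M:=|D\phi(z_0)|=\|D\phi\|_\infty$. At $z_0$, the first-order condition $\nabla w(z_0)=0$ yields the key orthogonality $D^2\phi(z_0)\,D\phi(z_0)=0$, while the second-order condition, combined with the positive definiteness of $a_k=\sigma\sigma^T$, gives $\tr(a_k D^2 w)(z_0)\le 0$.

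Next I would apply the product rule $w_{ij}=2\sum_m\phi_{mi}\phi_{mj}+2\sum_m\phi_m\phi_{mij}$, and then differentiate \eqref{eq:spde} in $z_m$, multiply by $\phi_m$, and sum over $m$; the orthogonality $\sum_m\phi_m\phi_{rm}=0$ annihilates all contributions from the $q$-derivatives of $a_k$ and $b_k$, leaving only their $z$-derivatives. Combining with $\tr(a_k D^2 w)(z_0)\le 0$ produces the pointwise Bernstein estimate
\[
\tr\!\big(a_k(D^2\phi)^2\big)(z_0)\le-D\phi\cdot\nabla_z b_k(z_0,D\phi(z_0))-\sum_{i,j,m}\phi_m(a_k)_{ij,z_m}\phi_{ij}
\]
at $z_0$. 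The $a_k$-term on the right is then absorbed into the quadratic form on the left via Young's inequality, exploiting the explicit block form of $\sigma$, leaving a residual polynomial in $M$; the remaining piece is controlled by Assumption \ref{A1}.

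Turning this pointwise inequality into an a priori bound on $M$ is carried out by reading it along the line $\gamma(s)=z_0+sD\phi(z_0)/M$: one obtains a one-dimensional differential inequality of Nagumo type for $|D\phi(\gamma(s))|$ whose right-hand side is governed by $\chi$, and the divergence condition $\int^\infty ds/(\chi(s)+1)=\infty$ then precludes $M$ from exceeding a universal constant $K=K(\chi)$. The main obstacle is performing these estimates uniformly in $k$: the eigenvalues of $a_k$ range from $1/k$ up to order $|H_p|^2$, and the Young absorption must be arranged so that no $k$-dependent residual obstructs the bound. The specific block structure of $\sigma$ -- in particular the `$1$' in its bottom-right corner, giving strict ellipticity in the time direction -- is what makes the absorption uniform in $k$, which is essential for $K$ to depend only on $\chi$.
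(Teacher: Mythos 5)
Your Bernstein strategy does not go through under the hypotheses of the lemma, for two concrete reasons. First, after differentiating \eqref{eq:spde}, multiplying by $\phi_m$ and summing, the right-hand side of your pointwise inequality at $z_0$ involves $\nabla_z b_k$ and $(a_k)_{ij,z_m}$. Assumption \ref{A1} bounds only $|b_k(z,q)|$ by $\chi(|q|)$; it gives no control whatsoever on the $z$-derivatives of $b_k$, and none on those of $a_k$ (which contain terms like $H_p\otimes H_{px}$, growing in $|p|$). So the residual after your Young absorption cannot be estimated by $\chi$ alone, and the constant $K$ could not depend only on $\chi$ as the lemma requires. Second, even granting such bounds, the Bernstein inequality you derive holds only at the single maximum point $z_0$, and its left-hand side $\tr\bigl(a_k(D^2\phi)^2\bigr)(z_0)\ge 0$ may well vanish there, in which case the inequality is vacuous and yields no bound on $M$. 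The step where you ``read the inequality along the line $\gamma(s)=z_0+sD\phi(z_0)/M$'' to produce a Nagumo-type differential inequality is not a valid deduction: a one-point inequality does not propagate along a curve, and no differential inequality for $|D\phi(\gamma(s))|$ has actually been derived. The Osgood condition $\int^\infty ds/(\chi(s)+1)=\infty$ has to enter the argument somewhere structurally, and in your write-up it is invoked only through this unproved ODE step.

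The paper avoids both problems with an Ishii--Lions doubling-of-variables argument: one maximizes $h(z,w)=\phi(z)-\phi(w)-\psi(|z-w|)$, where the concave increasing modulus $\psi$ is constructed directly from $\chi$ by solving $2\psi''=-\chi(\psi')-1$ (this is exactly where the Osgood condition is used, to guarantee $\psi'>0$ on $[0,2]$ with $\psi'\le K$ depending only on $\chi$). At an interior maximum with $\max h>0$, the matrix inequality for $D^2h$ together with the structure $a_k=\si\si^T$ gives $\tr\si^TD^2\phi(\bz)\si-\tr\si^TD^2\phi(\bw)\si\le 4\psi''$, and then the equation is used \emph{undifferentiated} at the two points $\bz,\bw$ with the \emph{same} gradient value $\psi'(|\bz-\bw|)q$, so that only the bound $|b_k|\le\chi(|q|)$ from Assumption \ref{A1} is needed; the choice of $\psi$ then yields the contradiction $-2\ge 0$. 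If you want to salvage a Bernstein proof you would need additional hypotheses (bounds on $D_zb_k$ and $D_za_k$, plus a mechanism making the Hessian term coercive in $M$), which is a genuinely stronger assumption than \ref{A1}.
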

\begin{proof}
For a suitable increasing concave function $\psi:[0,\infty)\to\R$
  with $\psi(0)=0$, consider $h:\R^{d+1}\times\R^{d+1}\to\R$
\[h(z,w)=\phi(z)-\phi(w)-\psi(|z-w|).\]
Since $\phi$ is periodic and $\psi$ is increasing, $h$ achieves its
global maximum in the the cube $[0,1]^{d+1}\times[0,1]^{d+1}$.
So we build $\psi$ on $[0,2]$ and extend it by keeping it increasing 
and concave.
Using Assumption \ref{A1}, one can choose $\psi$ to be a solution of
\[2\psi"=-\chi(\psi')-1 \hbox{ on } (0,2),\quad \psi(0)=0\] 
with $\psi'>0$ on $[0,2]$. In fact, for $K$ sufficiently large 
the function 
\[g(t)=\int_t^K\frac{2\,du}{\chi(u)+1}\]
is strictly decreasing and for some $a>0$, $[0,2]\subset g([a,K])$.
Letting $\psi:[0,2]\to\R$ be the primitive of $g^{-1}$ with $\psi(0)=0$,
we have that $\psi$ satisfies the requeriments.

Observe that $\max\limits_{z,w}h(z,w)\ge 0$ and we want to prove that
$\max h=0$ because in that case 
\[\phi(z)-\phi(w)\le\psi(|z-w|)\le K|z-w|,\]
the last inequality being a consequence of the concavity of $\psi$.
Assume by  contradiction that $\max h$ is positive and it is achieved at $(\bz,\bw)$.
Then 
\[(0,0)=Dh(\bz,\bw)=(D\phi(\bz)-\psi'(|\bz-\bw|)q,-D\phi (\bw)+\psi'(|\bz-\bw|)q),
\quad q=\dfrac{\bz-\bw}{|\bz-\bw|}\]
and 
\begin{align*}
  0\ge& D^2h(\bz,\bw)\\
=&\begin{pmatrix} D^2\phi(\bz)&0\\0&-D^2\phi(\bw)\end{pmatrix}- \psi'(|\bz-\bw|)
  \begin{pmatrix}B&-B\\-B&B\end{pmatrix}-\psi"(|\bz-\bw|)
  \begin{pmatrix}q\otimes q & -q\otimes q \\-q\otimes q &q\otimes q \end{pmatrix},
\end{align*}
where $B=\dfrac{I -q\otimes q}{|\bz-\bw|}$. Thus, $Bq=0$ and then
\[q^T(D^2\phi(\bz)-D^2\phi(\bw))q=(q^T\; -q^T)\begin{pmatrix} D^2\phi(\bz)&0\\0&-D^2\phi(\bw)\end{pmatrix}
  \begin{pmatrix}  q\\-q \end{pmatrix}\le 4\psi"(|\bz-\bw|).\]
Taking $v=\si^{-1}q$ we have
\begin{equation}
  \label{eq:uno}
  (\si v)^TD^2\phi(\bz)\si v-(\si v)^TD^2\phi(\bw)\si v\le 4\psi"(|\bz-\bw|).
\end{equation}

Similarly, for any  $r\in\R^{d+1}$ we have
\begin{equation}
  \label{eq:todos}
  (\si r)^TD^2\phi(\bz)\si r-(\si r)^TD^2\phi(\bw)\si r=((\si r)^T\; (\si r)^T) 
\begin{pmatrix} D^2\phi(\bz)&0\\0&-D^2\phi(\bw)\end{pmatrix}
  \begin{pmatrix}  \si r\\\si r \end{pmatrix}\le 0.
\end{equation}
Inequalities \eqref{eq:uno} and \eqref{eq:todos} imply that
\[\tr\si^T D^2\phi(\bz)\si-\tr\si^T D^2\phi(\bw)\si\le 4\psi"(|\bz-\bw|).\]
Since $\phi$ is a solution of \eqref{eq:spde}, we have
\begin{align*}
  \tr\si^T D^2\phi(\bz)\si+b_k(\bz, \psi'(|\bz-\bw|)q)&=0
\\\tr\si^T D^2\phi(\bw)\si+b_k(\bw, \psi'(|\bz-\bw|)q)&=0
\end{align*}
Thus
\[4\psi"(|\bz-\bw|)\ge b_k(\bw,\psi'(|\bz-\bw|)q)-b_k(\bz,\psi'(|\bz-\bw|)q)
\ge-2\chi(\psi'(|\bz-\bw|)),\]
giving $-2\ge 0$.
\end{proof}
\begin{Theorem}\label{TEU}
For the Hamiltonian \eqref{eq:mechanical}  equation (EL) has a smooth solution 
\end{Theorem}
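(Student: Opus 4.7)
The plan is to prove existence by the method of continuity, starting from the a priori Lipschitz bound of Lemma \ref{unilip}. I would embed the target Hamiltonian in the family $H^\lambda(x,t,p)=\frac12|p+\lambda\eta(t)|^2+\lambda V(x,t)$, $\lambda\in[0,1]$, and write (EL)$_\lambda$ in the divergence form
\[G_\lambda[\phi]:=\bigl(e^{k(\phi_t+H^\lambda(z,\nabla\phi))}\bigr)_t+\diver\bigl(e^{k(\phi_t+H^\lambda(z,\nabla\phi))}H^\lambda_p\bigr)=0.\]
Since Assumption \ref{A1} holds for the whole family with a common $\chi$, Lemma \ref{unilip} yields a $\lambda$-independent Lipschitz bound $K$ for smooth solutions. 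Let $S\subset[0,1]$ be the set of $\lambda$ for which $G_\lambda[\phi]=0$ has a smooth solution with $\int\phi=0$. The plan is to show $S$ is nonempty, closed, and open, hence $S=[0,1]$.

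Nonemptiness is immediate: at $\lambda=0$, $H^0(p)=\frac12|p|^2$ gives $b_k\equiv0$, and $\phi\equiv0$ solves $G_0[\phi]=0$. For closedness I would upgrade the Lipschitz bound to uniform $C^{m,\alpha}$ estimates: with $\|D\phi\|_\infty\le K$, the matrix $a_k(z,D\phi)$ is uniformly positive definite with smooth coefficients, so \eqref{eq:spde} is uniformly elliptic quasilinear. Krylov--Safonov (or Ladyzhenskaya--Ural'tseva) then gives $D\phi\in C^{0,\alpha}$; freezing coefficients and applying linear Schauder lifts this to $\phi\in C^{2,\alpha}$; differentiating the equation and iterating yields $C^{m,\alpha}$ bounds for every $m$, all uniform in $\lambda$. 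Arzel\`a--Ascoli then closes $S$.

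For openness I would apply the implicit function theorem to $\Phi:[0,1]\times C^{2,\alpha}_0(\T^{d+1})\to C^{0,\alpha}_0(\T^{d+1})$ given by $\Phi(\lambda,\phi)=G_\lambda[\phi]$, where the subscript $0$ denotes zero mean (the codomain sits there because $G_\lambda$ is a total spacetime divergence). Writing $m=e^{k(\phi_t+H^\lambda(z,\nabla\phi))}>0$ and $w=\psi_t+H^\lambda_p\cdot\nabla\psi$, a short calculation gives the linearization
\[L\psi=k(mw)_t+k\diver(mwH^\lambda_p)+\diver(mH^\lambda_{pp}\nabla\psi),\]
and integration by parts exposes it as formally self-adjoint with quadratic form
\[-(L\psi,\psi)_{L^2}=\int_{\T^{d+1}}\bigl[kmw^2+m\langle H^\lambda_{pp}\nabla\psi,\nabla\psi\rangle\bigr]dz\ge0,\]
vanishing only on constants (using $H^\lambda_{pp}=I$). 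Hence $\ker L=\R$, and by Fredholm theory $L:C^{2,\alpha}_0\to C^{0,\alpha}_0$ is an isomorphism, so the IFT supplies openness.

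The main obstacle I anticipate is the quasilinear regularity bootstrap from Lipschitz to $C^{2,\alpha}$: Lemma \ref{unilip} delivers only a $C^{0,1}$ bound, and the passage to H\"older continuity of $D\phi$ for \eqref{eq:spde} requires careful invocation of quasilinear Krylov--Safonov or De Giorgi--Nash--Moser estimates, noting that for fixed $k$ the eigenvalues of $a_k(z,D\phi)$ are uniformly bounded away from $0$ and $\infty$ on the range $|D\phi|\le K$. Once $C^{2,\alpha}$ is secured, the bootstrap to $C^\infty$ and the self-adjoint Fredholm analysis of $L$ are standard, and the three properties of $S$ combine to give a smooth solution at $\lambda=1$, i.e., for the original mechanical Hamiltonian.
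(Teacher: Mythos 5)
Your proposal is correct and follows essentially the same route as the paper: the continuation method along the family $H_\lambda(x,t,p)=\tfrac12|p+\lambda\eta(t)|^2+\lambda V(x,t)$, with nonemptiness at $\lambda=0$, closedness from the $\lambda$-uniform Lipschitz bound of Lemma \ref{unilip} plus elliptic regularity, and openness from the implicit function theorem applied to the symmetric, uniformly elliptic linearization whose kernel consists of the constants. Your extra detail on the Krylov--Safonov/Schauder bootstrap and the Fredholm setup fills in what the paper summarizes as ``elliptic regularity theory,'' but it is the same argument.
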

\begin{proof} We use the continuation method. Consider the family of Hamiltonians 
\begin{equation}
  \label{eq:hlam}
  H_\lam(x,t,p)=\frac 12 |p+\lam\eta(t)|^2+\lam V(x,t)
\end{equation}
and the PDE
\begin{equation}\tag{EL\(_\lam\)}
  (e^{k(u_t+H_\lam(x,t,\nabla u))})_t+\diver(e^{k(u_t+H_\lam(x,t,\nabla u))}
  D_pH_\lam(x,t,\nabla u))=0
\end{equation}
for $u:\T^{d+1}\to\R$ with $\int_{\T^{d+1}}u=0$.

Define 
\[\Lam:=\{\lam\in[0,1]:(\text{EL}_\lam)\text{ has a smooth solution} \}.\]
It is clear that $0\in\Lam$, with $u\equiv 0$. 

We claim that $\Lam$ is closed. In fact, it is clear 
that Assumption \ref{A1} holds  for $H_\lam$ with the same 
$\chi(s)=cs+d$ for all $\lam\in[0,1]$. Therefore Lemma \ref{unilip} implies that there is $K>0$
such that for any $\lam\in\Lam$ the corresponding solution satisfies
$\|Du_\lam\|_\infty\le K$. Elliptic regularity theory implies that we can bound 
uniformly in $\lam\in\Lam$ derivatives of $u_\lam$ of any order.  
Thus, any convergent sequence in $\Lam$ has a subsequence whose
corresponding sequence of solutions converge uniformly, along with all
derivatives.

We claim that $\Lam$ is open. Indeed, for $\lam\in\Lam$ the
linearization of (EL$_\lam$) about the solution $u$ is given by
\begin{multline}\label{linear}
\cL v:=-(e^{k(u_t+H_\lam(z,\nabla u))}(v_t+D_pH_\lam(z,\nabla u)\cdot\nabla v))_t
\\
-\diver(e^{k(u_t+H_\lam(z,\nabla u))}((v_t+D_pH_\lam(z,\nabla u)\cdot\nabla v) D_pH_\lam(z,\nabla u)
+\frac 1k\nabla v D_{pp}H_\lam(z,\nabla u))
\end{multline}
so that
\[\int_{\T^{d+1}} v\cL v=\int_{\T^{d+1}}e^{k(u_t+H_\lam(z,\nabla u))}
Dv^Ta_k(z,Du)Dv\]
and then $\cL$ is a symmetric, uniformly elliptic operator, whose null
space consists of the constants.
The Implicit Function Theorem yields a unique solution for any value
in a neighborhood of $\lam$. 

Since $\Lam$ is nonempty, closed and open it coincides with $[0,1]$.
Thus equation (EL) has a smooth solution.
\end{proof}
\section{Entropy penalized Mather theory}
\label{sec:epmt}

Given a Borel probability $\mu\in\cP(\T^{d+1}\times \R^d)$ we consider 
its push forward $m_{\mu}\in\cP(\T^{d+1})$ given by 
\begin{equation}
\label{push}
\int_{\T^{d+1}} \varphi(z) dm_\mu(z)=\int_{\T^{d+1}\times \R^d} \varphi(z) d\mu(z,v). 
\end{equation}
In the set $\cA\subset\cP(\T^{d+1})$ of measures
absolutely continuous with respect to Lebesgue measure,
the mapping
\[
m\mapsto  S^*[m]:=\int_{\T^{d+1}}\log m(z) dm(z)
\]
is convex and lower semicontinuous. This mapping can be extended in a unique way 
as a convex lower semicontinuous functional to $\cP(\T^{d+1})$ by
\[
\bar S[m]=\liminf_{m_n\in \cA, m_n\rightharpoonup m} S^*[m_n].
\]
Note that the map $\bar S$ is allowed to take the value $+\infty$. 
Furthermore, since $y\ln y\geq -1/e$ we have $\bar S\geq - 1/e$. 
Finally define $ S[\mu]=\bar S[m_\mu]$. 

Let $L:\T^{d+1}\times \R^d \to \R$ be a $C^2$ function, strictly convex 
and superlinear in $v$ 
\[\frac{L(z,v)}{|v|}\to +\infty, \text{ as } |v|\to +\infty.
\]

We consider the convex lower semicontinuous functional 
\begin{equation}
  \label{eq:functional}
  A_{L,k}(\mu)=\int_{\T^d\times \R^d} L(z,v)d\mu+\frac 1k S[\mu], 
\end{equation}
defined  on the space $\cC$ of measures $\mu\in\cP(\T^{d+1}\times\R^d)$
such that 
\begin{enumerate}[(a)]\label{holo}
\item $\displaystyle\int_{\T^{d+1}\times\R^d} |v| d\mu(z,v)<+\infty$
\item For all $\fui\in C^1(\T^{d+1})$,
  $\displaystyle\int_{\T^{d+1}\times \R^d} (\fui_t +\nabla\fui\cdot v)d\mu(x,t,v) =0$. 
\end{enumerate}
From Mather theory we know that for any $Q\in\R^d$ there is $\mu\in\cC$
such that $Q=\int\limits_{\T^{d+1}\times\R^d} v\, d\mu(z,v)$.
We define the effective Lagrangian $\bL_k:\R^d\to\R$  by
\begin{equation}\label{eq:efecL}
  \bL_k(Q):=\inf\{A_{L,k}(\mu):\mu\in\cC,  Q=\int_{\T^{d+1}\times\R^d} v\, d\mu(z,v)\}.
\end{equation}
Let $H:\T ^{d+1}\times\R ^d\to\R$ be Legendre transform of $L$. 
\begin{Proposition}
\begin{equation}
  \label{eq:desigual} 
 \inf_{\mu\in\cC} A_{L,k}(\mu)\ge -\frac 1k \inf_{\fui\in  C^1}\log I_k[\fui].
\end{equation} 
\end{Proposition}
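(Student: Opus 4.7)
The plan is to establish the pointwise inequality
\[A_{L,k}(\mu)+\frac{1}{k}\log I_k[\fui]\ge 0\]
for every $\mu\in\cC$ and every $\fui\in C^1(\T^{d+1})$, and then take the infimum in both arguments. The three ingredients are Fenchel--Young for the Legendre pair $(L,H)$, the holonomy constraint (b) in the definition of $\cC$, and the Gibbs duality identity between relative entropy and the log-Laplace functional.

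First I would treat the case $m_\mu\in\cA$, in which $S[\mu]=S^*[m_\mu]$ is a genuine integral. Fenchel--Young yields $L(z,v)+H(z,\nabla\fui(z))\ge \nabla\fui(z)\cdot v$ pointwise; integrating against $\mu$ and using condition (b) to replace $\int\nabla\fui\cdot v\,d\mu$ by $-\int\fui_t\,dm_\mu$ gives
\[\int L\,d\mu \ge -\int(\fui_t+H(z,\nabla\fui))\,dm_\mu.\]
Next I would apply the Gibbs inequality, an immediate consequence of Jensen applied to $-\log$: for any density $m\in\cA$ and any continuous $f$,
\[\int f\,m\,dz \le \frac{1}{k}S^*[m]+\frac{1}{k}\log\int e^{kf}\,dz.\]
Choosing $f=\fui_t+H(z,\nabla\fui)$ and chaining with the previous estimate produces the desired bound whenever $m_\mu\in\cA$.

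The only delicate step is extending to $\mu$ with $m_\mu\notin\cA$. If $\bar S[m_\mu]=+\infty$ the inequality is trivial, so assume it is finite. By the definition of $\bar S$, there exist $m_n\in\cA$ with $m_n\rightharpoonup m_\mu$ and $\liminf S^*[m_n]=\bar S[m_\mu]$; since $\fui_t+H(z,\nabla\fui)$ is continuous on $\T^{d+1}$, the Gibbs estimate applied to each $m_n$ passes to the limit and gives
\[\int(\fui_t+H(z,\nabla\fui))\,dm_\mu \le \frac{1}{k}\bar S[m_\mu]+\frac{1}{k}\log I_k[\fui].\]
Combining this with the Fenchel--Young estimate (which involves only $\mu$ and is untouched by the approximation) and then taking the infimum first over $\fui$ and then over $\mu$ produces the claim. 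The main potential obstacle I anticipate is verifying this last approximation step, but the continuity of the test function together with weak convergence of the $m_n$ makes it routine.
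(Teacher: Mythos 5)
Your proposal is correct and follows essentially the same route as the paper: Fenchel--Young together with the holonomy condition (b) to get $\int L\,d\mu\ge-\int(\fui_t+H(z,\nabla\fui))\,dm_\mu$, followed by the entropy/log-Laplace duality to bound $\int(\fui_t+H(z,\nabla\fui))\,dm_\mu-\frac1k\bar S[m_\mu]$ by $\frac1k\log I_k[\fui]$. The only cosmetic difference is that you obtain the duality step directly from Jensen's inequality and make the approximation for non-absolutely-continuous $m_\mu$ explicit via the liminf definition of $\bar S$, whereas the paper argues through the explicit maximizer $m_\fui$ and the tangent-line inequality $t\log t+1\ge t$, leaving the approximation implicit.
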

\begin{proof}
Let $\mu\in\cC$ and $\fui\in C^1(\T^{d+1})$. Let $m_\mu$ be given by \eqref{push}. We have
\begin{eqnarray*}
  \int L(z,v) d\mu &=&
\int ( L(z,v) - \fui_t-\nabla\fui\cdot v) d\mu\\ 
&\ge& - \int (\fui_t+H(z,\nabla\fui)) dm_\mu,
\end{eqnarray*}
since, for all $(z,v)\in\T^{d+1}\times \R^d$, $L(z,v)-\nabla\fui(z) v\geq -H(z,\nabla\fui(z))$,
with equality only when $v=H_p(z,\nabla\fui(z))$.
Define $h_\fui:\cP(\T^{d+1})\to\R$ by
\[
h_\fui(m)=\int_{\T^{d+1}} (\fui_t+H(z,\nabla\fui)) dm -\frac 1k \bar S[m]
\]
Then
\[ A_{L,k}(\mu) \ge -h_\fui(m_\mu).\] 
Letting $m_\fui(z)=e^{k(\fui_t+H(z,\nabla\fui))}/I_k[\fui]$,
we have $ h_\fui(m_\fui)=\dfrac 1k \log I_k[\fui]$.

The convex function  $t\mapsto t\log t$ has Legendre transform 
$s\mapsto e^{s-1}$. In particular this implies that  $t\log t+1\ge t$, and so, for any
$m\in\cA$ we obtain
\begin{align*}
 h_\fui(m)\le h(m_\fui,\fui)+\int(\fui_t+H(z,\nabla\fui)-\frac 1k\log m_\fui-\frac 1k)(m(x)-m_\fui(x)) dx.
\end{align*}
The convexity and an approximation argument show that in fact the previous inequality
holds for any $m\in\cP(\T^{d+1})$. 
From the definition of $m_\fui$, and since $m$ and $m_\fui$ are
probability measures, the second term on the rhs vanishes and then
\[\frac 1k\log I_k[\fui]=\max_m h_\fui(m).\]
Therefore,
\[A_{L,k}(\mu)\ge -\frac 1k\log I_k[\fui],\]
and \eqref{eq:desigual} follows.
\end{proof}
 \begin{Proposition}
  If (EL) has a smooth solution $u_k$, $m_k=m_{u_k}$ then
\begin{equation}
\label{matherrep}
\mu_k(z,v)=\delta(v-H_p(z,\nabla u_k(z))) m_k(z),
\end{equation}
is a minimizer of \eqref{eq:functional} and $u_k$ is a minimizer of 
\eqref{eq:min-per}.
\end{Proposition}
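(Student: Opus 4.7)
The plan is to show $\mu_k\in\cC$, compute $A_{L,k}(\mu_k)$ explicitly, and then combine with the previous proposition to obtain both conclusions simultaneously.

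First I would verify $\mu_k\in\cC$. Since $u_k$ is smooth and $\T^{d+1}$ is compact, $H_p(z,\nabla u_k(z))$ is bounded, so condition (a) holds. For condition (b), recall that the substitution $m_k=e^{k(u_{k,t}+H(z,\nabla u_k)-\bH_k)}$ turns (EL) into the MFG system; in particular, $m_k$ satisfies the continuity equation $\partial_t m_k+\diver(H_p(z,\nabla u_k)\,m_k)=0$. Testing against $\fui\in C^1(\T^{d+1})$ and integrating by parts on the torus,
\[\int_{\T^{d+1}\times\R^d}(\fui_t+\nabla\fui\cdot v)\,d\mu_k(z,v)=\int_{\T^{d+1}}\bigl(\fui_t+\nabla\fui\cdot H_p(z,\nabla u_k)\bigr)m_k\,dz=0.\]

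Next I would compute $A_{L,k}(\mu_k)$ by exploiting the equality case in the Young inequality used in the previous proposition. Legendre duality gives $L(z,H_p(z,p))=p\cdot H_p(z,p)-H(z,p)$, so with $p=\nabla u_k(z)$,
\[\int L(z,v)\,d\mu_k=\int_{\T^{d+1}}\bigl(\nabla u_k\cdot H_p(z,\nabla u_k)-H(z,\nabla u_k)\bigr)m_k\,dz.\]
Applying the holonomy identity above with $\fui=u_k$ replaces $\int\nabla u_k\cdot H_p\,m_k$ by $-\int u_{k,t}m_k$, reducing the right hand side to $-\int(u_{k,t}+H(z,\nabla u_k))m_k\,dz$. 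On the other hand $m_k\in\cA$, so $S[\mu_k]=\int m_k\log m_k\,dz$, and $\log m_k=k(u_{k,t}+H(z,\nabla u_k)-\bH_k)$ combined with $\int m_k=1$ gives $\tfrac 1k S[\mu_k]=\int(u_{k,t}+H(z,\nabla u_k))m_k\,dz-\bH_k$. These two terms cancel to yield $A_{L,k}(\mu_k)=-\bH_k$. The normalization $\int m_k=1$ also forces $I_k[u_k]=e^{k\bH_k}$, hence $A_{L,k}(\mu_k)=-\tfrac 1k\log I_k[u_k]$.

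Finally, combining $\mu_k\in\cC$ with the previous proposition,
\[-\tfrac 1k\log I_k[u_k]=A_{L,k}(\mu_k)\ge\inf_{\mu\in\cC}A_{L,k}(\mu)\ge -\tfrac 1k\inf_{\fui\in C^1}\log I_k[\fui]\ge -\tfrac 1k\log I_k[u_k],\]
so every inequality is an equality: $u_k$ minimizes $I_k$ and $\mu_k$ minimizes $A_{L,k}$ on $\cC$. In particular, since the constraint $\int v\,d\mu_k=\int H_p(z,\nabla u_k)m_k\,dz$ fixes the rotation vector of $\mu_k$, the same argument shows $\mu_k$ realizes $\bL_k$ at that particular $Q$.

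I do not anticipate a serious obstacle; the most delicate point is verifying condition (b), which ultimately reduces to an integration by parts against the continuity equation implicit in (EL), justified by the smoothness of $u_k$ and the periodicity of the torus.
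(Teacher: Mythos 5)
Your overall route is exactly the paper's: show $\mu_k\in\cC$ via the continuity equation encoded in (EL), compute $A_{L,k}(\mu_k)$ using the equality case of the Legendre/Young inequality together with the entropy term, and then sandwich $A_{L,k}(\mu_k)=-\tfrac 1k\log I_k[u_k]$ between $\inf_{\mu\in\cC}A_{L,k}$ and $-\tfrac 1k\inf_\fui\log I_k[\fui]$ using the preceding Proposition, forcing equality everywhere. The computations (condition (a) by compactness, condition (b) by integrating (EL) against $\fui$, the cancellation $\int L\,d\mu_k=-\int(u_{k,t}+H)\,m_k$ via the holonomy identity with $\fui=u_k$) are all correct and are what the paper does, if more tersely.

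There is, however, one genuine circularity in your normalization step. You take $m_k=e^{k(u_{k,t}+H(z,\nabla u_k)-\bH_k)}$ with $e^{k\bH_k}=\inf_u I_k[u]$ and then invoke ``the normalization $\int m_k=1$'', claiming it \emph{forces} $I_k[u_k]=e^{k\bH_k}$. But under the Proposition's hypothesis $u_k$ is only a smooth solution of (EL), not yet known to be a minimizer, and with your normalization $\int m_k=I_k[u_k]e^{-k\bH_k}$, so $\int m_k=1$ is precisely equivalent to the conclusion you are trying to prove; if $u_k$ were not a minimizer, your $\mu_k$ would not even be a probability measure. The repair is one line and is what the paper's notation $m_{u_k}$ (from the proof of the previous Proposition, $m_\fui=e^{k(\fui_t+H(z,\nabla\fui))}/I_k[\fui]$) already builds in: normalize by $I_k[u_k]$ itself, i.e.\ set $m_k=e^{k(u_{k,t}+H(z,\nabla u_k))}/I_k[u_k]$. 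Then $\int m_k=1$ is automatic, the continuity equation is unaffected (multiplying (EL) by a constant changes nothing), your cancellation gives $A_{L,k}(\mu_k)=-\tfrac 1k\log I_k[u_k]$ directly without ever mentioning $\bH_k$, and your final chain of inequalities then yields both that $\mu_k$ minimizes $A_{L,k}$ on $\cC$ and that $u_k$ minimizes $I_k$ (so that, only a posteriori, $I_k[u_k]=e^{k\bH_k}$).
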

\begin{proof}
Definition \eqref{matherrep} means that for all continuous 
function $F:\T^{d+1}\times \R^d\to \R$ we have
\[\int_{\T^{d+1}\times\R^d}F(z,v)d\mu_k=
\int_{\T^{d+1}}F(z,H_p(z,\nabla u))dm_k\]
and since $(u_k,m_k)$ is a solution of (MFG), we get that $\mu_k\in\cC$.
Moreover,
\[A_{L,k}(\mu_k)=\int_{\T^{d+1}}[L(z,H_p(z,\nabla u_k))+\frac 1k\log m_k]dm_k
=-h(m_k,u_k)=-\frac 1k\log I_k[u_k],\]
and then
\begin{equation}
  \label{eq:igual}
A_{L,k}(\mu_k)=\inf_{\mu\in\cC} A_{L,k}(\mu)= -\frac 1k \inf_{\fui\in  C^1}\log I_k[\fui]= 
-\frac 1k\log I_k[u_k].
\end{equation} 
\end{proof}
\begin{Lemma}\label{Lconvex}
The effective Lagrangian $\bL_k$ is convex.  
\end{Lemma}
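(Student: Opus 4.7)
The plan is the standard convex-combination argument for functionals defined by an infimum over a convex constraint set with a convex integrand.

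Fix $Q_0,Q_1\in\R^d$ and $\lam\in[0,1]$. If either $\bL_k(Q_0)$ or $\bL_k(Q_1)$ equals $+\infty$ there is nothing to prove, so assume both are finite. Given $\ep>0$, choose $\mu_0,\mu_1\in\cC$ with $\int v\,d\mu_i=Q_i$ and $A_{L,k}(\mu_i)\le\bL_k(Q_i)+\ep$. Form the convex combination
\[\mu:=\lam\mu_0+(1-\lam)\mu_1.\]
I would then verify in sequence that (i) $\mu$ lies in $\cC$, (ii) $\mu$ has mean velocity $\lam Q_0+(1-\lam)Q_1$, and (iii) $A_{L,k}(\mu)\le\lam A_{L,k}(\mu_0)+(1-\lam)A_{L,k}(\mu_1)$. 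Once these are in hand, the inequality
\[\bL_k(\lam Q_0+(1-\lam)Q_1)\le A_{L,k}(\mu)\le\lam\bL_k(Q_0)+(1-\lam)\bL_k(Q_1)+\ep\]
together with $\ep\to 0$ finishes the proof.

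Items (i) and (ii) are immediate because the constraint (a) bounding $\int|v|d\mu$, the closed-loop constraint (b) against test functions $\fui\in C^1(\T^{d+1})$, and the velocity mean $\int v\,d\mu$ are all linear in $\mu$, so they pass through convex combinations. For (iii), the first piece $\mu\mapsto\int L(z,v)\,d\mu$ of $A_{L,k}$ is linear, so it splits exactly. The entropy piece is the key point: the push forward operation $\mu\mapsto m_\mu$ defined by \eqref{push} is linear, so
\[m_\mu=\lam m_{\mu_0}+(1-\lam)m_{\mu_1},\]
and applying the convexity of $\bar S$ on $\cP(\T^{d+1})$ (established in the construction of $\bar S$ right before \eqref{eq:functional}) yields
\[S[\mu]=\bar S[m_\mu]\le\lam\bar S[m_{\mu_0}]+(1-\lam)\bar S[m_{\mu_1}]=\lam S[\mu_0]+(1-\lam)S[\mu_1].\]
Adding this to the linear part gives (iii).

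There is no real obstacle; the only subtle point worth flagging is that $S$ is defined on measures on $\T^{d+1}\times\R^d$ while $\bar S$ is a functional on $\cP(\T^{d+1})$, so one must notice that the linearity of the push forward is what transfers convexity from $\bar S$ to $S$. Everything else is a routine check that the admissible set $\cC$ and the mean-velocity constraint are preserved under convex combinations, together with the linearity of $\mu\mapsto\int L\,d\mu$.
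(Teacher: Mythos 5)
Your proposal is correct and follows essentially the same route as the paper: take $\ep$-near-minimizers for $Q_0,Q_1$, form the convex combination, use linearity of the constraints, of $\mu\mapsto\int L\,d\mu$, and of the push forward $\mu\mapsto m_\mu$, and the convexity of $\bar S$ to bound $A_{L,k}$ of the combination, then let $\ep\to 0$. Your write-up is in fact slightly more complete than the paper's, since you explicitly verify membership in $\cC$ and handle the case of infinite values.
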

\begin{proof}
Given $\ep>0$, let $\nu_1, \nu_2$ be such that $A_{L,k}(\nu_i)<\bL_k(Q_i)+\ep$ and 
$\int v\,d\nu_i=Q_i$.
Note that, for $0\leq \lambda\leq 1$,  
\begin{align*}
  \int v\, d(\lam\nu_1+(1-\lam) \nu_2))&=
\lam \int v\, d\nu_1+(1-\lam) \int v\, d\nu_2=\lam Q_1+(1-\lam) Q_2,\\
m_{\lam\nu_1+(1-\lam)\nu_2}&= \lam m_{\nu_1}+(1-\lam)m_{\nu_2}.
\end{align*}
Since $\bar S$ is convex, we have that
\begin{align*}
  \bL_k(\lam\nu_1+(1-\lam)\nu_2) \le A_{L,k}(\lam\nu_1+(1-\lam)\nu_2)
\le &\lam A_{L,k}(\nu_1)+(1-\lam)A_{L,k}(\nu_2)\\
=&\lam\bL_\ep(Q_1)+(1-\lam)\bL_\ep(Q_2)+\ep.
\end{align*}
\end{proof}
For $P\in\R^d$ we define
\begin{equation}\label{eq:efecH}
  \bH_k(P):=\frac 1k\inf_\fui\int_{\T^{d+1}}e^{k(\fui_t +H(x,t,P+\nabla\fui))}
\end{equation}
\begin{Corollary}\label{alfa}
  The Legendre transform of $\bL_k$ is $\bH_k$.
\end{Corollary}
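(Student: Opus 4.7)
The plan is to reduce the claim to the duality identity \eqref{eq:igual}, applied to a $P$-tilted version of the Lagrangian. Fix $P\in\R^d$ and set $L^P(z,v):=L(z,v)-P\cdot v$; its Legendre transform in $v$ is $H^P(z,p):=H(z,P+p)$. Since every $\mu\in\cC$ has a well-defined first moment by condition (a), the tilt merely subtracts a linear term from the action, namely
\[A_{L^P,k}(\mu)=A_{L,k}(\mu)-P\cdot\int_{\T^{d+1}\times\R^d}v\,d\mu.\]

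I would then stratify the infimum over $\mu\in\cC$ according to the rotation vector $Q=\int v\,d\mu$. Using the definition \eqref{eq:efecL} of $\bL_k$, this gives
\[\inf_{\mu\in\cC}A_{L^P,k}(\mu)=\inf_{Q\in\R^d}\Bigl[\inf_{\substack{\mu\in\cC\\\int v\,d\mu=Q}}A_{L,k}(\mu)-P\cdot Q\Bigr]=\inf_{Q\in\R^d}\bigl[\bL_k(Q)-P\cdot Q\bigr].\]
On the other hand, applying the two preceding propositions to $L^P$ and $H^P$ in place of $L$ and $H$, and recognizing the Evans functional for $H^P$ as $\int e^{k(\fui_t+H(z,P+\nabla\fui))}$, the analogue of \eqref{eq:igual} reads
\[\inf_{\mu\in\cC}A_{L^P,k}(\mu)=-\bH_k(P).\]
Comparing the two formulas for $\inf A_{L^P,k}$ yields
\[\bH_k(P)=\sup_{Q\in\R^d}\bigl\{P\cdot Q-\bL_k(Q)\bigr\},\]
which is precisely the Legendre transform identity.

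The only nontrivial step is the equality in the analogue of \eqref{eq:igual}: the first proposition of this section supplies only $\ge$, while the reverse inequality requires a smooth minimizer $u_k^P$ of the Evans problem for $H^P$, together with the Mather measure representation \eqref{matherrep}. This is exactly where Theorem \ref{TEU} enters: for the mechanical Hamiltonian \eqref{eq:mechanical}, $H^P(x,t,p)=\tfrac12|p+(P+\eta(t))|^2+V(x,t)$ is again of mechanical form (with drift $\eta+P$ in place of $\eta$), so Theorem \ref{TEU} applies verbatim to $H^P$ and produces the required smooth solution. The convexity of $\bL_k$ established in Lemma \ref{Lconvex} guarantees that the resulting Legendre transform identification is internally consistent, concluding the proof.
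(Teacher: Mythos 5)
Your argument is correct and is essentially the paper's own proof, made explicit: both tilt the Lagrangian to $L(z,v)-P\cdot v$ (whose Legendre transform is $H(z,P+\nabla\fui)$), use that the duality inequality \eqref{eq:desigual} is in fact an equality (via the smooth minimizer supplied by Theorem \ref{TEU}, valid for the tilted mechanical Hamiltonian as well), and stratify the infimum over $\cC$ by the rotation vector $Q$ to obtain $\bH_k(P)=\sup_Q\{P\cdot Q-\bL_k(Q)\}$. The only difference is that you spell out the stratification and the applicability of Theorem \ref{TEU} to $H(z,P+p)$, which the paper leaves implicit.
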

\begin{proof}
  Since the  Legendre transform of $L(z,v)-Pv$
is $H(z,P+p)$ and \eqref{eq:desigual} is an equality, we have that
\begin{equation*}\label{eq:alfa}
\sup_QPQ-\bL_\ep(Q)=-\inf_{\mu\in\cC}A_{L-\lip
  P,\cdot\rip,k}(\mu)=\frac 1k\log\inf_\phi\int e^{k(\phi_t+H(z,P+\nabla\phi))}dx.  
\end{equation*}
\end{proof}
\begin{Lemma}\label{Hconve}
For Hamiltonian \eqref{eq:mechanical}, $\bH_k(P)$ is strictly convex. 
Furthermore for each $P$, \eqref{eq:efecH}
admits at most one minimizer, up to the addition of constants. 
\end{Lemma}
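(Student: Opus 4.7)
The plan is to exploit that the mechanical Hamiltonian $H(z,p)=\tfrac12|p+\eta(t)|^2+V(x,t)$ is exactly quadratic in $p$, so convex combinations obey a pointwise second-order identity, and to couple this with H\"older's inequality applied to the exponential integrands.

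First I would note that for every $P\in\R^d$ the shifted Hamiltonian $(z,p)\mapsto H(z,P+p)$ is again mechanical, with $\eta(t)$ replaced by $P+\eta(t)$. So Theorem \ref{TEU} supplies a smooth solution $\fui_P$ of the corresponding (EL) equation, and by the proposition preceding this lemma $\fui_P$ is a minimizer for \eqref{eq:efecH}. In particular, the infimum defining $\bH_k(P)$ is attained for every $P$.

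For strict convexity, fix $P_0\ne P_1$, $\lam\in(0,1)$, set $P_\lam:=\lam P_1+(1-\lam)P_0$, let $\fui_i$ be a minimizer at $P_i$, and write $\fui_\lam:=\lam\fui_1+(1-\lam)\fui_0$, $D:=(P_1-P_0)+\nabla(\fui_1-\fui_0)$, $f_i:=(\fui_i)_t+H(z,P_i+\nabla\fui_i)$. The quadratic nature of $H$ in $p$ gives the exact pointwise identity
\[(\fui_\lam)_t+H(z,P_\lam+\nabla\fui_\lam)=\lam f_1+(1-\lam)f_0-\frac{\lam(1-\lam)}{2}|D|^2.\]
Exponentiating, integrating, and applying H\"older's inequality with exponents $1/\lam$ and $1/(1-\lam)$ one obtains
\[\int e^{k[(\fui_\lam)_t+H(z,P_\lam+\nabla\fui_\lam)]}=\int e^{-\frac{k\lam(1-\lam)}{2}|D|^2}e^{k\lam f_1+k(1-\lam)f_0}\le\int e^{k\lam f_1+k(1-\lam)f_0}\le e^{k[\lam\bH_k(P_1)+(1-\lam)\bH_k(P_0)]}.\]
Periodicity gives $\int_{\T^{d+1}} D\,dz=P_1-P_0\ne 0$, so $|D|^2>0$ on a set of positive measure and the first inequality is strict; taking $\tfrac1k\log$ then yields $\bH_k(P_\lam)<\lam\bH_k(P_1)+(1-\lam)\bH_k(P_0)$.

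For uniqueness, I would suppose $\fui_0,\fui_1$ are both minimizers at the same $P$ and repeat the computation with $\lam=\tfrac12$, $P_0=P_1=P$. The leftmost integral is at least $e^{k\bH_k(P)}$ because $\fui_{1/2}$ is admissible, while the rightmost equals it, so all three integrals coincide. Equality in the middle step forces $D=\nabla(\fui_1-\fui_0)=0$ a.e., hence $\fui_1-\fui_0$ depends only on $t$; equality in H\"older's inequality, together with $\int e^{kf_1}=\int e^{kf_0}$, forces $e^{kf_1}=e^{kf_0}$ a.e., which combined with $\nabla\fui_1=\nabla\fui_0$ reduces to $(\fui_1)_t=(\fui_0)_t$, so $\fui_1-\fui_0$ is constant. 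The only genuinely delicate point in the whole argument is that the inequality between the first two integrals is strict, and this is exactly where the periodicity identity $\int D\,dz=P_1-P_0$ and the hypothesis $P_0\ne P_1$ are essential.
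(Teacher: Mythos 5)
Your argument is correct, and its skeleton --- existence of smooth minimizers for the shifted mechanical Hamiltonian via Theorem \ref{TEU}, testing the convex combination $\fui_\lam$ in \eqref{eq:efecH}, and H\"older's inequality --- is the same as the paper's. Where you genuinely differ is in how strictness and uniqueness are extracted. The paper argues by contradiction using only the general convexity inequality \eqref{eq:des}: equality throughout the chain \eqref{des} plus strict convexity of $H$ forces $P_0+\nabla f_0=P_1+\nabla f_1$, and integrating over $\T^{d+1}$ gives $P_0=P_1$; uniqueness is then obtained from the Euler--Lagrange equation \eqref{eq:pde}, which yields $f_{0tt}=f_{1tt}$ and hence a constant difference. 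You instead exploit that the mechanical Hamiltonian is exactly quadratic in $p$, so the convexity deficit is the explicit term $\tfrac{\lam(1-\lam)}{2}|D|^2$, and strict convexity follows directly (no contradiction) from the periodicity identity $\int_{\T^{d+1}}D=P_1-P_0\ne0$ --- the same fact the paper uses, deployed one step earlier. For uniqueness you replace the PDE step by the equality case of H\"older combined with the equality of the two minimal values, giving $e^{kf_1}=e^{kf_0}$ and hence $(\fui_1)_t=(\fui_0)_t$ once $\nabla\fui_1=\nabla\fui_0$; this is more elementary and does not require the two minimizers to be $C^2$ solutions of (EL), at the price of being tied to the quadratic structure, whereas the paper's equality analysis would apply to any strictly convex $H$ for which smooth minimizers exist.
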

\begin{proof}
For $P\in\R^d$ the Hamiltonian 
$H(x,t,p+P)=\frac 12 |p+P+\eta(x,t)|^2+V(x,t)$ is of the same type.
Suppose there are 
$P_0,P_1\in\R^d$ and $0<\lam<1$ such that  
\[\bH_k(\lam P_0+(1-\lam)P_1)= \lam\bH_k (P_0)+ (1-\lam)\bH_k(P_1).\]
Let $f_i\in C^2(\T^{d+1})$ be a solution of (EL) with $u=P_ix+f_i, i=0,1$ 
so that
\[ \bH_k(P_i)=\frac 1k\log\int_{\T^{d+1}} e^{k(f_{it}+H(z,P_i+\nabla f_i))}.\] 
For $\fui=\lam f_0+ (1-\lam)f_1$ we have
\begin{align*}
\fui_t&=\lam f_{1t}+(1-\lam)f_{2t},\\ 
\nabla\fui+\lam P_0+ (1-\lam)P_1&=\lam (\nabla f_0+P_0)+
                                    (1-\lam)(\nabla f_1+P_1), 
\end{align*}
and, by convexity of $H$,
\begin{equation}
  \label{eq:des}
H(z,\lam P_0+(1-\lam)P_1+\nabla\fui)\le\lam
H(z,\nabla f_0+P_0)+(1-\lam)(H(z,\nabla f_1+ P_1)).
\end{equation}
 
Convexity of the exponential function and H\"older inequality yield
\begin{align}
\notag  e^{k\bH_k(\lambda P_0+(1-\lambda)P_1)} &\leq
\int_{\T^{d+1}} e^{k(\fui_t+H(z,\lam P_0 +(1-\lam)P_1+\nabla\fui))}\\
\notag & \le
\int_{\T^{d+1}}  e^{k(\lam(f_{0t}+H(z,\nabla f_0+P_1))+(1-\lam)(f_{1t}+H(z,\nabla f_1+P_1))}\\ 
\notag &\le\left[\int_{\T^{d+1}}  e^{k(f_{0t}+H(z,\nabla f_0+P_0))}\right]^\lam 
\left[\int_{\T^{d+1}}  e^{k(f_{1t}+H(z,\nabla f_1+P_1))}\right]^{(1-\lam)} \\ 
\label{des} &=e^{\lam k\bH_k(P_0)} e^{(1-\lam)k\bH_k(P_1)}=e^{k\bH_k(\lam P_0 +(1-\lam)P_1)}. 
\end{align}
Therefore all inequalities in \eqref{des} are
equalities and so is \eqref{eq:des}. Since $H$ is strictly convex
$\nabla f_0+P_0= \nabla f_1+P_1$ at all points. Hence $P_1-P_0=\nabla_k(f_0-f_1)$,
and so $P_0=P_1$. Thus $f_0, f_1$ are solutions of \eqref{eq:pde} 
for the same Hamiltonian with $\nabla f_0=\nabla f_1$. Thus $f_{0tt}=f_{1tt}$
and then $f_0-f_1$ is constant.
\end{proof}
\begin{Theorem}
  For the Hamiltonian given by \eqref{eq:mechanical} the effective functions 
$\bL_k, \bH_k$ are smooth.
\end{Theorem}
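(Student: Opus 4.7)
The plan is to establish smoothness of $\bH_k$ first via the implicit function theorem, applied to the unique-solution framework provided by Theorem \ref{TEU} and Lemma \ref{Hconve}, and then derive smoothness of $\bL_k$ from the Legendre duality of Corollary \ref{alfa}, after verifying that $\bH_k$ has everywhere positive-definite Hessian and is superlinear. For the first step, fix $\alpha\in(0,1)$ and work in the Banach space $X=\{u\in C^{2,\alpha}(\T^{d+1}):\int u=0\}$. For each $P\in\R^d$, the shifted Hamiltonian $H(z,P+p)=\frac12|P+p+\eta(t)|^2+V(x,t)$ remains of the mechanical form \eqref{eq:mechanical}, so Theorem \ref{TEU} yields a smooth solution $u_P\in X$ of the corresponding Euler--Lagrange equation, unique by Lemma \ref{Hconve}. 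Viewing that equation as a map $F:\R^d\times X\to Y$, with $Y$ a suitable H\"older space of mean-zero functions, the partial Fr\'echet derivative $D_uF(P,u_P)$ is precisely the symmetric uniformly elliptic operator $\cL$ displayed in \eqref{linear}, whose null space on $C^{2,\alpha}(\T^{d+1})$ consists of the constants. Restricted to $X$, $\cL$ is therefore an isomorphism, and the implicit function theorem produces a smooth mapping $P\mapsto u_P$ into $X$; running IFT in $C^{k,\alpha}$ for every $k$ and invoking uniqueness lifts this to smoothness as a map into $C^\infty(\T^{d+1})$. The formula $\bH_k(P)=\frac1k\log\int_{\T^{d+1}}e^{k(u_{Pt}+H(z,P+\nabla u_P))}$ then shows $\bH_k\in C^\infty(\R^d)$.

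For the Hessian, the envelope theorem gives $\nabla\bH_k(P)=\int_{\T^{d+1}}H_p(z,P+\nabla u_P)\,m_P\,dz$, where $m_P=e^{k(u_{Pt}+H(z,P+\nabla u_P)-\bH_k(P))}>0$ is the density from (MFG). Differentiating once more in a direction $\xi\in\R^d$ and writing $w=\partial_\xi u_P$, $n=\partial_\xi m_P$, I would use the linearized (MFG) system
\begin{gather*}
w_t+H_p\cdot(\xi+\nabla w)=\frac{n}{km_P}+\xi\cdot\nabla\bH_k(P),\\
n_t+\diver(nH_p)+\diver\bigl(m_PH_{pp}(\xi+\nabla w)\bigr)=0,
\end{gather*}
and combine them (multiply the first by $n$, the second by $w$, integrate, and use periodicity) to obtain the quadratic-form identity
\[\xi^T D^2\bH_k(P)\xi=\int_{\T^{d+1}}m_P(\xi+\nabla w)^T H_{pp}(\xi+\nabla w)+\frac1k\int_{\T^{d+1}}\frac{n^2}{m_P}.\]
Since $m_P>0$ and $H_{pp}$ is positive definite, both terms are nonnegative and their sum vanishes only when $n\equiv 0$ and $\nabla w\equiv-\xi$. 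Feeding this back into the first linearized equation gives $w_t\equiv\xi\cdot\nabla\bH_k(P)$; periodicity of $w$ in $t$ forces $\xi\cdot\nabla\bH_k(P)=0$, so $w=w(x)$ with $\nabla_x w=-\xi$, and periodicity in $x$ yields $\xi=0$. Hence $D^2\bH_k(P)$ is positive definite at every $P$.

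For the last step, $\bL_k$ is convex by Lemma \ref{Lconvex} and finite on all of $\R^d$ by Mather theory, hence continuous and in particular lower semicontinuous. Combining Corollary \ref{alfa} with Fenchel--Moreau gives $\bL_k=\bH_k^*$. A Jensen's-inequality estimate, taking $\fui\equiv 0$ in \eqref{eq:efecH} and using $\int\fui_t=0$, yields the quadratic lower bound $\bH_k(P)\ge\frac12|P|^2+P\cdot\int_0^1\eta(t)\,dt+C$, so $\bH_k$ is superlinear. Consequently $\nabla\bH_k:\R^d\to\R^d$ is a $C^\infty$-diffeomorphism, and the identity $\bL_k(Q)=P\cdot Q-\bH_k(P)$ with $P=(\nabla\bH_k)^{-1}(Q)$ exhibits $\bL_k$ as a smooth function. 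The step I expect to require the most care is the quadratic-form identity for $D^2\bH_k$: correctly combining the two linearized equations and showing that the resulting nonnegative expression forces $\xi=0$ using only periodicity is the crux of the smoothness argument, since everything else is either direct differentiation or an appeal to the already-established duality.
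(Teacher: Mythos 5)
Your proposal is correct, and its first half is exactly the paper's argument: view (EL) with $u=Px+\phi$ as $F(P,\phi)=0$, note that $D_\phi F$ at a solution is the operator $\cL$ of \eqref{linear}, which is uniformly elliptic with kernel the constants, and apply the Implicit Function Theorem to get smooth dependence $P\mapsto u_P$ and hence $\bH_k\in C^\infty$. Where you diverge is the passage to $\bL_k$. The paper disposes of it in one line: ``$\bH_k$ is strictly convex so $D\bH_k$ has a smooth inverse $G_k$'' and then $\bL_k(Q)=QG_k(Q)-\bH_k(G_k(Q))$. As stated that is a leap: the strict convexity of Lemma \ref{Hconve} gives injectivity of $\nabla\bH_k$, but a smooth globally defined inverse needs (i) $D^2\bH_k(P)>0$ at every $P$ and (ii) surjectivity of $\nabla\bH_k$, i.e.\ superlinearity of $\bH_k$ (equivalently finiteness of $\bL_k=\bH_k^*$, which also requires the Fenchel--Moreau step you make explicit using Lemma \ref{Lconvex} and Corollary \ref{alfa}). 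You supply exactly these missing ingredients: the identity
\[\xi^T D^2\bH_k(P)\xi=\int_{\T^{d+1}}m_P(\xi+\nabla w)^T H_{pp}(\xi+\nabla w)+\frac1k\int_{\T^{d+1}}\frac{n^2}{m_P}\]
obtained by pairing the two linearized (MFG) equations does check out (the cross terms cancel by periodicity and $\int n=0$), and the degeneracy analysis forcing $\xi=0$ is right; together with superlinearity this makes the Legendre step airtight. So your route buys a complete justification of the paper's final sentence at the cost of an extra computation. One small slip: the quadratic lower bound on $\bH_k$ is not obtained by ``taking $\fui\equiv 0$'' in \eqref{eq:efecH} (plugging a competitor into an infimum gives an upper bound); rather, Jensen applied to an arbitrary $\fui$, using $\int\fui_t=0$ and $\int\nabla\fui=0$, gives $\bH_k(P)\ge\frac12\bigl|P+\int_0^1\eta\bigr|^2+\min V$ uniformly over competitors, which is the superlinearity you need — the conclusion stands after this rewording.
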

\begin{proof}
For $P\in\R^d$ consider equation (EL) with $u=Px+\phi$ and define 
$F(P,\phi)$ as the l.h.s. of that equation. We have seen that 
for  a solution $\phi=\phi(\cdot,P)$ of $F(P,\phi)=0$, $\cL=D_2F(P,\phi)$ 
is given by \eqref{linear}. The Implicit Function Theorem implies 
that $\phi(\cdot,P)$ is smooth in $P$ and so $\bH_k$ are smooth. 
Moreover $\bH_k$ is stricltly convex so $D\bH_k$ has a smooth inverse 
$G_k$ and therefore $\bL_k(Q)=QG_k(Q)-\bH_k(G_k(Q))$ is smooth.
\end{proof}
\section{Approximating weak KAM theory}
\label{sec:converge}

In this section we assume the Hamiltonian is given 
by \eqref{eq:mechanical} so that  $b_k$ satisfies assumption \eqref{A1}
with $\chi$ independent of $k$. 
Let $u_k$ be the minimizer of \eqref{eq:min-per} with $\int u_k=0$.
From Lemma \ref{unilip} there is $K$ such that $\|Du_k\|_\infty\le K$ for any 
$k$, so passing to a subsequence, $u_k$ converges uniformly to a Lipschitz 
$u:\T^{d+1}\to\R$ and 
$Du_k\rightharpoonup Du$ weakly in $L^q(\T^{d+1},\R^{d+1})$, for any $1\le q<\infty$.

As in the autonomous case we have the following Theorem which has a
similar proof
\begin{Theorem} \label{awkam}\quad
  \begin{enumerate}[(i)]
  \item We have $\lim\limits_{k\to\infty}\bH_k=\bH.$
  \item Function $u$ is a viscosity solution of
 \begin{equation}\label{aronsson}
u_{tt}+2H_p\nabla u_t+\nabla^2u(H_p,H_p)+H_t+H_x\cdot H_p=0
\end{equation} 
\item Moreover, $u_t+H(z,\nabla u)\le\bH$ Lebesgue a.e. in $\T^{d+1}$. 
  \end{enumerate}
\end{Theorem}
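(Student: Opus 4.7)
The plan is to establish the three parts in the order (iii)$\to$(i)$\to$(ii); the common ingredients are Lemma \ref{unilip}, Jensen's inequality, mollification, and the standard viscosity-stability argument for classical solutions of a regularized PDE. By Lemma \ref{unilip} the minimizers $\{u_k\}$ are uniformly Lipschitz, so after extracting a subsequence $u_k \to u$ uniformly on $\T^{d+1}$ and $\nabla u_k \rightharpoonup \nabla u$ weakly in every $L^q$. The upper bound $\bH_k \le \bH$ is immediate: for any $\fui \in C^1(\T^{d+1})$, $I_k[\fui] \le \exp(k\max_z(\fui_t + H(z,\nabla\fui)))$, hence $\bH_k \le \max_z(\fui_t + H(z,\nabla\fui))$, and the min-max formula \eqref{eq:minmax} gives $\bH_k \le \bH$.

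For the almost-everywhere inequality underlying (iii), I will apply Jensen to $e^{kg_k}$ with $g_k := u_{k,t} + H(z,\nabla u_k)$. Since $\int_{\T^{d+1}} e^{kg_k} = e^{k\bH_k}$, Jensen on an arbitrary measurable $E\subset\T^{d+1}$ with $|E|>0$ gives
\[\bH_k \;\ge\; \frac{\log|E|}{k} + \frac{1}{|E|}\int_E g_k.\]
Weak convergence of $u_{k,t}$ to $u_t$ handles the linear piece, while convexity of $H(z,\cdot)$ implies that $v\mapsto\int_E H(z,v(z))$ is lower semicontinuous for the weak $L^q$ topology. Taking $\liminf_k$ yields
\[\liminf_k \bH_k \;\ge\; \frac{1}{|E|}\int_E \bigl(u_t + H(z,\nabla u)\bigr).\]
Since $E$ is arbitrary, Lebesgue differentiation delivers $u_t + H(z,\nabla u) \le \liminf_k \bH_k$ for a.e.\ $z\in\T^{d+1}$.

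To close (i) (and hence (iii)), I upgrade this pointwise bound to an input for \eqref{eq:minmax} by mollification. Let $u^\ep := u * \rho_\ep$ on $\T^{d+1}$ (periodic extension and mollifier). Jensen applied in the $p$-variable and uniform continuity of $H$ on $\T^{d+1}\times\{|p|\le\|\nabla u\|_\infty + 1\}$ give
\[u^\ep_t(z) + H(z, \nabla u^\ep(z)) \;\le\; \liminf_k \bH_k + C\ep, \qquad z\in\T^{d+1},\]
with $C$ depending only on $\|\nabla u\|_\infty$ and on a local modulus of $H$. Plugging $u^\ep$ into \eqref{eq:minmax} yields $\bH \le \liminf_k \bH_k + C\ep$; sending $\ep\downarrow 0$ and combining with $\bH_k\le\bH$ proves $\bH_k\to\bH$, after which (iii) is the a.e.\ bound of the previous paragraph with $\bH$ on the right.

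For (ii) I rely on viscosity stability. Rewrite \eqref{eq:pde} as $\tr(a_k(z, Du_k) D^2 u_k) + b_k(z, Du_k) = 0$ with
\[a_k = \frac{1}{k}\begin{pmatrix}H_{pp} & 0\\0 & 0\end{pmatrix} + w w^T \ge 0, \qquad w = (H_p, 1);\]
for the mechanical Hamiltonian $b_k$ is independent of $k$, and $a_k \to w w^T$ locally uniformly. Let $\fui$ be smooth with $u - \fui$ attaining a strict local maximum at $z_0$. Uniform convergence $u_k\to u$ produces maxima $z_k\to z_0$ of $u_k - \fui$ at which $Du_k(z_k) = D\fui(z_k)$ and $D^2 u_k(z_k) \le D^2\fui(z_k)$. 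Monotonicity of $X\mapsto\tr(a_k X)$ for $a_k\ge 0$ combined with the PDE at $z_k$ gives
\[\tr\bigl(a_k(z_k, D\fui(z_k)) D^2\fui(z_k)\bigr) + b_k(z_k, D\fui(z_k)) \;\ge\; 0,\]
and letting $k\to\infty$ yields $\fui_{tt} + 2H_p\nabla\fui_t + \nabla^2\fui(H_p,H_p) + H_t + H_x\cdot H_p \ge 0$ at $z_0$, which is one of the two viscosity inequalities for \eqref{aronsson}; the symmetric argument at a strict local minimum of $u - \fui$ gives the opposite inequality. The main obstacle throughout is that $\nabla u_k$ converges only weakly, but both Jensen/convexity (for (iii) and (i)) and pointwise gradient matching at contact points (for (ii)) bypass this cleanly.
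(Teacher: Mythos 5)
Your proposal is correct and takes essentially the same route as the paper's argument: the upper bound $\bH_k\le\bH$ from the min-max formula \eqref{eq:minmax}, the a.e.\ inequality $u_t+H(z,\nabla u)\le\liminf_k\bH_k$ for the Lipschitz limit via uniform gradient bounds and weak lower semicontinuity of the convex integrand (the paper packages this as a contradiction argument with an exponential measure estimate on the set where $u_{kt}+H(z,\nabla u_k)$ exceeds $\bH-\de$, while you get it directly by Jensen on arbitrary measurable sets), and then mollification of $u$ plugged back into \eqref{eq:minmax} to close (i) and (iii). For (ii) the paper simply invokes the autonomous case, and your standard viscosity-stability argument at contact points, using $a_k\ge 0$, smoothness of the $u_k$, and $a_k\to ww^T$ with $w=(H_p,1)$, is exactly the intended proof.
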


  \begin{Proposition}\label{matherlimit}
Let $\mu_k$ be the measure defined by \eqref{matherrep}.
Passing to a subsequence such that $\mu_k\rightharpoonup\mu$, we have  
\begin{enumerate}[(a)]
\item $\mu$ is a Mather measure 
\item $\lim\limits_{k\to\infty}\dfrac 1k S[\mu_k]=0$
\end{enumerate}
\end{Proposition}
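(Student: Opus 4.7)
The plan is to combine three ingredients: the uniform Lipschitz bound of Lemma \ref{unilip}, the variational identity \eqref{eq:igual} satisfied by $\mu_k$, and the convergence $\bH_k\to\bH$ from Theorem \ref{awkam}(i).

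First I would set up the compactness carefully. Since $\|\nabla u_k\|_\infty\le K$ by Lemma \ref{unilip}, the velocities $v=H_p(z,\nabla u_k(z))$ appearing in the support of $\mu_k$ are bounded by a constant $C$ independent of $k$. Consequently every $\mu_k$ is supported in the compact set $\T^{d+1}\times\overline{B(0,C)}$, and the weak limit $\mu$ (from the given subsequence) has the same compact support. For any $\fui\in C^1(\T^{d+1})$ the function $(z,v)\mapsto \fui_t(z)+\nabla\fui(z)\cdot v$ is bounded and continuous on this compact set, so passing to the limit in the holonomy condition for $\mu_k$ gives $\mu\in\cC$.

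Next I would exploit identity \eqref{eq:igual} in the form
\[\int L\,d\mu_k+\frac 1k S[\mu_k]=-\bH_k.\]
The same compact-support argument (with $L$ continuous) gives $\int L\,d\mu_k\to\int L\,d\mu$, while Theorem \ref{awkam}(i) yields $\bH_k\to\bH$. The universal bound $\bar S\ge -1/e$ forces $\liminf_k\frac 1k S[\mu_k]\ge 0$, and taking $\limsup$ in the identity above produces
\[\int L\,d\mu+\limsup_k\frac 1k S[\mu_k]=-\bH.\]
To close the loop I would establish the standard Mather inequality: for $\fui\in C^1(\T^{d+1})$ and $\mu\in\cC$, the Young inequality $L(z,v)-\nabla\fui\cdot v\ge -H(z,\nabla\fui)$ together with holonomy yields
\[\int L\,d\mu\ge -\int(\fui_t+H(z,\nabla\fui))\,dm_\mu\ge -\max_z(\fui_t+H(z,\nabla\fui)).\]
Taking the infimum over $\fui$ and invoking the min-max formula \eqref{eq:minmax} gives $\int L\,d\mu\ge -\bH$. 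Chained with the previous display,
\[-\bH\ge\int L\,d\mu+\limsup_k\frac 1k S[\mu_k]\ge\int L\,d\mu+\liminf_k\frac 1k S[\mu_k]\ge -\bH,\]
so equality holds throughout. This simultaneously gives $\int L\,d\mu=-\bH$, which says $\mu$ is a Mather measure (part (a)), and $\lim_k\frac 1k S[\mu_k]=0$ (part (b)).

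The main obstacle is passing the weak limit through the $L$-integral; this is harmless here only because Lemma \ref{unilip} confines all $\mu_k$ to a common compact set in the $v$ variable. Without that estimate one would need a delicate uniform-integrability argument, which is not obvious from the entropy term alone, so the mechanical structure of $H$ enters the limiting weak KAM statement through this point.
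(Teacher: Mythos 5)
Your argument is correct, and its skeleton coincides with the paper's: both rest on the identity $\int L\,d\mu_k+\tfrac1k S[\mu_k]=A_{L,k}(\mu_k)=-\bH_k$ from \eqref{eq:igual}, the convergence $\bH_k\to\bH$ of Theorem \ref{awkam}(i), the passage $\int L\,d\mu_k\to\int L\,d\mu$ (legitimate, as you note, because the uniform Lipschitz bound of Lemma \ref{unilip} confines all $\mu_k$ to a fixed compact set in the velocity variable), and a sign condition on the entropy in the limit. You diverge from the paper in two sub-steps, both to your advantage in simplicity. For $\liminf_k\tfrac1k S[\mu_k]\ge 0$ you invoke the universal bound $\bar S\ge -1/e$ (so $\tfrac1k S[\mu_k]\ge -\tfrac1{ke}\to 0$), whereas the paper runs a truncation argument, splitting the entropy integral over $\{\log m_k\ge -k\lam\}$ and its complement and using the uniform bound on $\tfrac1k\log m_k=u_{kt}+H(z,\nabla u_k)-\bH_k$; your shortcut is valid and shorter, the paper's argument uses more structure than needed here. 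For the lower bound on the action, the paper simply cites $-\bH=\min_{\nu\in\cC}\int L\,d\nu$ from Mather/weak KAM theory, while you rederive the needed inequality via Fenchel--Young plus holonomy plus the min-max formula \eqref{eq:minmax}; this is self-contained and, importantly, your argument applies verbatim to every $\nu\in\cC$, not just to the limit $\mu$ --- you should state this explicitly, since concluding that $\mu$ is a Mather measure in (a) requires knowing that no holonomic measure has action below $-\bH$, not merely that $\mu$ itself satisfies $\int L\,d\mu\ge -\bH$. You also make explicit the verification that $\mu\in\cC$ (holonomy and finite first moment pass to the limit thanks to the common compact support), which the paper leaves implicit; that is a worthwhile addition rather than a deviation.
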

\begin{proof}
The function $\dfrac 1k\log m_k=u_{kt}+H(z,\nabla u_k)$ is uniformly bounded. 
For $\lam>0$,
\begin{align*}
\int_{\T^{d+1}}\frac 1k\log m_kdm_k&
=\int\limits_{\{\log m_k\ge-k\lam\}}\frac 1k\log m_kdm_k+
\int\limits_{\{\log m_k<-k\lam\}}\frac 1k\log m_kdm_k  \\
&\ge -\lam\int\limits_{\{\log m_k\ge-k\lam\}}dm_k-
\int\limits_{\{\log_k<-k\lam\}} Ce^{-k\lam} dz\\
&\ge -\lam+Ce^{-k\lam}.
\end{align*}
Thus,
\[\liminf_{k\to\infty} \frac 1kS[\mu_k]\ge-\lam\]
Since $\lam>0$ is arbitrary
\begin{equation}
  \label{eq:min-seq}
\liminf_{k\to\infty}\frac 1kS[\mu_k]\ge 0.
\end{equation}
We recall that 
\[-\bH = \min_{\mu\in\cC}\int L\,d\mu.\]
From \eqref{eq:min-seq}
\begin{align*}
\int L\, d\mu&=\lim_{k\to\infty}\int L\, d\mu_k
=\lim_{k\to\infty}A_{L,k}(\mu_k)-\frac 1k S[\mu_k]\\
&=-\lim_{k\to\infty}\bH_k-\frac 1k S[\mu_k]\le -\bH.
\end{align*}
Therefore, $\mu$ is a minimizing measure, the inequality is an equality
and (b) holds. 
\end{proof}

\end{document}